\documentclass[10pt]{amsart}
\usepackage{amssymb}
\usepackage{epsfig}
\usepackage{url}
\usepackage[T1]{fontenc}
\usepackage{color}
\usepackage{tikz}
\usetikzlibrary{arrows}
\theoremstyle{plain}

\newtheorem{thm}{Theorem}[section]
\newtheorem{cor}[thm]{Corollary}

\newtheorem{rem}[thm]{Remark}
\newtheorem{ques}[thm]{Question}
\newtheorem{conj}[thm]{Conjecture}

\def\bbb{\mathbb}
\def\op{\operatorname}
\renewcommand{\phi}{\varphi}

\newcommand{\N}{\bbb{N}}
\newcommand{\Z}{\bbb{Z}}

\setlength{\oddsidemargin}{0.5cm}  
\setlength{\evensidemargin}{0.5cm} 
\setlength{\topmargin}{-2.0cm} \setlength{\textheight}{25cm}
\setlength{\textwidth}{15cm}

\begin{document}

\title[A note on the Diophantine equation $2^{n-1}(2^{n}-1)=x^3+y^3+z^3$]{A note on the Diophantine equation $2^{n-1}(2^{n}-1)=x^3+y^3+z^3$}
\author{Maciej Ulas}

\keywords{perfect numbers, sums of three cubes} \subjclass[2010]{}
\thanks{}

\begin{abstract}
Motivated by the recent result of Farhi we show that for each $n\equiv \pm 1\pmod{6}$ the title Diophantine equation has at least two solutions in integers. As a consequence, we get that each (even) perfect number is a sum of three cubes of integers. Moreover, we present some computational results concerning the considered equation and state some questions and conjectures.
\end{abstract}

\maketitle

\section{Introduction}\label{sec1}
Let $n\in\N_{+}$ and put $P_{n}=2^{n-1}(2^{n}-1)$. We say that $N$ is a perfect number if its the sum of proper divisors. In other words, $\sigma(N)=2N$, where $\sigma(N)=\sum_{d|N}d$. Up to the date, we do not know whether there is an odd perfect number. On the other side, as was proved by Euclid, if $N$ is an even perfect number then $N=P_{p}$, where $p$ and $2^{p}-1$ are primes. An early state of research on perfect numbers is presented in the first chapter in Dickson classical book \cite{Dic}. We know that there are at least 49 even perfect numbers. The largest know corresponds with $p=74207281$. One among many interesting properties of perfect numbers, is the property observed by Heath, that each even perfect number $>6$ is a sum of consecutive odd cubes of positive integers. This observation motivated Farhi to ask what is the smallest number $r$ such that each even perfect number $>6$ is the sum of at most $r$ cubes of non-negative integers. In \cite{Farhi}, Farhi proved that $r=5$ does the job. In fact, he observed that if $n\equiv 1\pmod{6}$, then $M_{n}$ is the sum of three cubes of positive integers. This is simple consequence of the classical polynomial identity
$$
2t^6-1=(t^2+t-1)^3+(t^2-t-1)^3+1.
$$
Indeed, multiplying it by $t^6$ and then taking $t=2^{n}$ we immediately get the representation of $P_{6n+1}$ as sum of three positive cubes. In case of $n\equiv 5\pmod{6}$ the number $P_{n}$ is a sum of five positive cubes. It is important to note that $P_{n}$ is not necessarily perfect in the proof presented by Farhi. Let us also note that perfect numbers corresponding to $p=3, 5, 7, 13, 17$ can be represented as a sum of three cubes of positive integers. This observation motivated Farhi to state the conjecture saying that each perfect number is such a sum (Conjecture 2 in \cite{Farhi}). Unfortunately, we were unable to prove this statement. This fail is a good motivation to consider the Diophantine equation
\begin{equation}\label{maineq}
P_{n}=x^3+y^3+z^3
\end{equation}
for fixed $n$, and asks about its solutions in (not necessarily positive) integers.

The question about the existence of integer solutions of the equation $N=x^3+y^3+z^3$ is a classical one. The equation has no solutions for $N\equiv \pm 4\pmod{9}$ and it is conjectured that there are infinitely many solutions otherwise. However, this conjecture is proved only for $N$ being a cube or twice a cube (see for example \cite{Mor}). It is clear that the number $P_{n}$ is not a cube nor twice a cube and $P_{n}\not\equiv \pm4\pmod{9}$ for all $n\in\N_{+}$. Thus, the question concerning the existence of integer solutions of the equation (\ref{maineq}) is non-trivial.

In Section \ref{sec2} we prove that for $n\equiv 1, 2, 4, 5\pmod{6}$ the Diophantine equation (\ref{maineq}) has at least one solution in integers. Moreover, in the case of $n\equiv \pm 1\pmod{6}$ we show the existence of at least two solutions. We also prove that for each $n\in\N_{+}$ the number $P_{n}$ can be represented as a sum of four cubes of integers. In Section \ref{sec3} we present results of our numerical computations concerning the equation (\ref{maineq}). In particular, for each $n\leq 40$ a solution of (\ref{maineq}) is found and the table of all non-negative solutions is presented. Moreover, we state some questions and conjectures which may stimulate further research.

\section{The results}\label{sec2}

We have the following
\begin{thm}\label{three}
If $n\equiv 1\pmod{3}$ or $n\equiv 2\pmod{6}$ then the Diophantine equation {\rm (\ref{maineq})} has at least one solution in integers. Moreover, if $n\equiv \pm 1\pmod{6}$ then the Diophantine equation {\rm (\ref{maineq})} has at least two solutions in integers.
\end{thm}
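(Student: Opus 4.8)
The plan is to derive, for each admissible residue of $n$ modulo $6$, an explicit representation of $P_{n}=2^{2n-1}-2^{n-1}$ as a sum of three cubes by pulling out a perfect cube that is itself a power of $2$ and then substituting a power of $2$ into a short polynomial identity. Which of the powers of $2$ appearing in $P_{n}$ are cubes is governed by $n\bmod 3$, and that is what splits the argument.

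First I would settle the single-solution claim. If $n\equiv 1\pmod 3$, write $n=3m+1$ and $u=2^{m}$; then $2^{2n-1}=2u^{6}$ and $2^{n-1}=u^{3}$, so that $P_{n}=2u^{6}-u^{3}=(u^{2})^{3}+(u^{2})^{3}+(-u)^{3}$. If $n\equiv 2\pmod 3$, write $n=3m+2$ and $u=2^{m}$; then $2^{2n-1}=(2u^{2})^{3}$ and $2^{n-1}=2u^{3}$, so that $P_{n}=(2u^{2})^{3}+(-u)^{3}+(-u)^{3}$. Together these cover $n\equiv 1,2,4,5\pmod 6$ and in particular give one solution for the classes named in the first assertion; incidentally they also cover $n\equiv 5\pmod 6$, which will serve as one of the two solutions required there.

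For the two-solution claim with $n\equiv 1\pmod 6$, say $n=6k+1$, I would produce a second, essentially different representation from Farhi's identity: multiplying $2t^{6}-1=(t^{2}+t-1)^{3}+(t^{2}-t-1)^{3}+1$ by $t^{6}$ and putting $t=2^{k}$ yields $P_{n}=\big(t^{2}(t^{2}+t-1)\big)^{3}+\big(t^{2}(t^{2}-t-1)\big)^{3}+(t^{2})^{3}$. For $k\ge 1$ this is distinct from the first representation $(u^{2})^{3}+(u^{2})^{3}+(-u)^{3}$ (compare the multisets of summands), the degenerate case $n=1$ being settled directly.

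The heart of the matter is the second solution for $n\equiv 5\pmod 6$, say $n=6k+5$, where Farhi's scaling fails: the subtracted term $2^{n-1}=2^{6k+4}$ has exponent $\equiv 1\pmod 3$, hence is not a cube, so no identity whose ``constant'' part is a single cube can match it. Here I would factor $P_{n}=(2^{2k+1})^{3}\big(2^{6k+6}-2\big)=(2t^{2})^{3}(64t^{6}-2)$ with $t=2^{k}$, reducing everything to writing $64t^{6}-2$ as a sum of three cubes by a new identity. Searching with the even/odd ansatz $A=E+O$, $B=E-O$, $C$ even (so that $A^{3}+B^{3}=2E^{3}+6EO^{2}$) and taking the admissible constant part $(c,h)=(-1,0)$ in $2c^{3}+h^{3}=-2$ forces the coefficient relation $a=b^{2}$; the branch $b=0$ merely reproduces the trivial solution, while $b=2$ gives
\[
64t^{6}-2=(4t^{2}+2t-1)^{3}+(4t^{2}-2t-1)^{3}+(-4t^{2})^{3}.
\]
Multiplying through by $(2t^{2})^{3}$ yields $P_{n}=(8t^{4}+4t^{3}-2t^{2})^{3}+(8t^{4}-4t^{3}-2t^{2})^{3}+(-8t^{4})^{3}$, a solution distinct from $(2u^{2})^{3}+(-u)^{3}+(-u)^{3}$. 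The main obstacle is precisely locating this nontrivial identity: the naive symmetric split of $64t^{6}-2$ only recovers the trivial representation, so the nonzero odd part $O=2t$ is essential, and the decisive check is that the displayed relation is a genuine polynomial identity rather than a coincidence at a single value of $t$.
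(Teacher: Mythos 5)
Your proposal is correct --- I verified all of your identities, and they are genuine polynomial identities, not coincidences --- and it follows the same basic method as the paper (explicit sums of three cubes obtained by specializing short polynomial identities at powers of $2$), but with a noticeably different menu of identities. Concretely: (i) your single identity $P_{3m+2}=(2u^{2})^{3}+(-u)^{3}+(-u)^{3}$, $u=2^{m}$, covers all of $n\equiv 2\pmod 3$ uniformly, so it simultaneously handles $n\equiv 2\pmod 6$ and supplies the first solution for $n\equiv 5\pmod 6$; the paper states this identity only for even $m$ (as $P_{6n+2}=(2^{4n+1})^{3}-(2^{2n})^{3}-(2^{2n})^{3}$) and instead uses the separate identity $P_{6n+5}=(2^{n}(2^{3(n+1)}+2^{2(n+1)}+1))^{3}+(2^{n}(2^{3(n+1)}-2^{2(n+1)}-1))^{3}-(2^{2(n+1)}(2^{2n+1}+1))^{3}$ for the first $n\equiv 5\pmod 6$ solution --- your observation that the pure-power identity extends to odd $m$ is a genuine simplification. (ii) For $n\equiv 1\pmod 6$ you pair the $(u^{2},u^{2},-u)$ representation with Farhi's identity $2t^{6}-1=(t^{2}+t-1)^{3}+(t^{2}-t-1)^{3}+1$; the paper pairs it instead with $64t^{3}(2t^{6}-1)=(4t^{3}-21)^{3}+(4t^{3}+21)^{3}-(22t)^{3}$, which yields a representation with a negative entry where yours (Farhi's) is all-positive for $n>1$; both work equally well for the theorem. (iii) Your ``new'' identity $64t^{6}-2=(4t^{2}+2t-1)^{3}+(4t^{2}-2t-1)^{3}+(-4t^{2})^{3}$ is, after multiplication by $(2t^{2})^{3}$, exactly the paper's second $P_{6n+5}$ identity $(2^{2n+1}(2^{2(n+1)}\mp 2^{n+1}-1))^{3}-(2^{4n+3})^{3}$, so here you have independently rediscovered the paper's decomposition. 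The only loose end is $n=1$, where your two representations of $P_{1}=1$ degenerate to the same multiset $\{1,1,-1\}$; you dismiss this as ``settled directly,'' which is acceptable since, e.g., $1=1^{3}+0^{3}+0^{3}=1^{3}+1^{3}+(-1)^{3}$, and it is worth noting that the paper's own proof has the identical edge case (its $\pm 21$ identity is non-integral at $P_{1}$), so this does not count against you. Your distinctness checks (comparing multisets of summands, using that $2t^{2}(4t^{2}\pm 2t-1)$ has an odd factor $>1$ while the pure-power solutions do not) are asserted rather tersely but are immediate to verify.
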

\begin{proof}
Our result is an immediate consequence of the following identities which hold for all $n\in\N_{+}$: \begin{align*}
P_{3n+1}=&(2^{2n})^3+(2^{2n})^3-(2^{n})^3,\\
P_{6n+2}=&(2^{4n+1})^3-(2^{2n})^3-(2^{2n})^3,\\
P_{6n+1}=&(2^{n-2}(2^{3n+2}-21))^{3}+(2^{n-2}(2^{3n+2}+21))^{3}-(11\cdot 2^{2n-1})^3,\\
P_{6n+5}=&(2^{n}(2^{3(n+1)}+2^{2(n+1)}+1))^{3}+(2^{n}(2^{3(n+1)}-2^{2(n+1)}-1))^{3}-(2^{2(n+1)}(2^{2n+1}+1))^3\\
        =&(2^{2n+1}(2^{2(n+1)}-2^{n+1}-1))^{3}+(2^{2n+1}(2^{2(n+1)}+2^{n+1}-1))^{3}-(2^{4n+3})^{3}.
\end{align*}
Replacing $n$ by $2n$ in the first equality we get the second solution of the equation $P_{6n+1}=x^3+y^3+z^3$.
\end{proof}

\begin{rem}
{\rm Let us note that the expression for $P_{6n+1}$ from the proof of Theorem \ref{three}, can be deduced from the polynomial identity
$$
64t^3(2t^6-1)=(4t^3-21)^3+(4t^3+21)^3-(22t)^3
$$
by multiplying both sides by $\frac{1}{64}t^3$, and then taking $t=2^{n}$. Moreover, the first expression for $P_{6n+5}$ follows from the identity
$$
t^3(t^6-2)=(t^3+t^2+1)^3+(t^3-t^2-1)^3-(t(t^2+2))^3
$$
by multiplying both sides by $\frac{1}{8}t^3$, and then taking $t=2^{n+1}$.
}
\end{rem}

\begin{cor}\label{perfect}
For each perfect number $N$, the number of representations of $N$ as a sum of three cubes of integers is $\geq 2$.
\end{cor}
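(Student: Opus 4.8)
The plan is to combine Theorem \ref{three} with the classical Euclid--Euler description of even perfect numbers. Recall that every even perfect number $N$ is of the form $N=P_{p}=2^{p-1}(2^{p}-1)$ with both $p$ and the Mersenne number $2^{p}-1$ prime; in particular the exponent $p$ is itself a prime. Since no odd perfect number is known, it is enough to treat the numbers $P_{p}$, and the whole argument reduces to reading off the residue of $p$ modulo $6$ and invoking Theorem \ref{three}.

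First I would handle the generic case. Every prime $p\geq 5$ satisfies $p\equiv 1$ or $p\equiv 5\pmod 6$, that is $p\equiv\pm 1\pmod 6$. Hence Theorem \ref{three} applies directly and yields at least two representations of $P_{p}=N$ as a sum of three integer cubes. This already establishes the corollary for every even perfect number except the two smallest, corresponding to $p=2$ and $p=3$.

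The main obstacle is precisely these two exceptional primes, since $2$ and $3$ are the only primes lying outside the residue classes covered by Theorem \ref{three}; they give the perfect numbers $P_{2}=6$ and $P_{3}=28$. For these the second representation is \emph{not} produced by the polynomial identities used in the theorem, and it cannot be read off from the non-negative solutions alone (for instance $6$ has no representation in non-negative integers, while $28$ only has $0^{3}+1^{3}+3^{3}$). I would therefore dispose of $6$ and $28$ by a direct search, exhibiting two representations for each in (possibly negative) integers, consistent with the computations reported in Section \ref{sec3}. Once the two small cases are settled by hand, the corollary follows for all even perfect numbers, which, as no odd perfect number is known, completes the argument.
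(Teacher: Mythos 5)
Your proposal is correct and follows essentially the same route as the paper: invoke Theorem \ref{three} for primes $p\geq 5$ (all of which satisfy $p\equiv\pm 1\pmod{6}$) and settle $p=2,3$ by exhibiting explicit representations. The only difference is that you defer the two small cases to a search rather than writing the witnesses down; the paper records them explicitly, namely $P_{2}=2^3-1^3-1^3=65^3-43^3-58^3$ and $P_{3}=3^3+1^3=14^3+13^3-17^3$, and your observation that $6$ admits no non-negative representation (so negative entries are unavoidable there) is accurate.
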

\begin{proof}
From Theorem \ref{three}, we know that for each odd prime $p>3$, the number $N=P_{p}$ has at least two representations as a sum of three cubes of integers. For $p=2, 3$ we have
$$
P_{2}=2^3-1^3-1^3=65^3-43^3-58^3, \quad P_{3}=3^3+1^3=14^3+13^3-17^3,
$$
and get the result.
\end{proof}

We firmly believe that the equation (\ref{maineq}) has solution in integers for each $n\in\N_{+}$ (see Conjecture  \ref{conj1}). Unfortunately, we were unable to prove such statement. Instead, we offer the following

\begin{thm}\label{four}
For each $n\in\N_{+}$, the number $P_{n}$ can be represented as a sum of four cubes of integers.
\end{thm}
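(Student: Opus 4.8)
The plan is to decide the representation according to the residue of $n$ modulo $6$. For every $n\not\equiv 0\pmod 3$ there is nothing new to do: the proof of Theorem~\ref{three} already exhibits $P_n$ as a sum of three cubes for precisely the classes $n\equiv 1\pmod 3$, $n\equiv 2\pmod 6$ and $n\equiv 5\pmod 6$ (its four displayed identities cover exactly $\{1,2,4,5\}\bmod 6$), and since $0=0^3$, adjoining a zero cube turns any such representation into a sum of four cubes. This reduces the whole statement to the single case $n\equiv 0\pmod 3$, say $n=3m$.

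For $n=3m$ the divisibility $3\mid n$ makes $2^{n}=(2^{m})^3$ a perfect cube, so with $a=2^{2m-1}$ and $b=2^{m-1}$ (and the relation $a=2b^2$) one has the compact forms $P_{3m}=4(a^3-b^3)=32b^6-4b^3$. This also explains why the fourth cube is genuinely needed: both dyadic blocks $2^{2n-1}$ and $2^{n-1}$ carry an exponent $\equiv 2\pmod 3$, so each is $4$ times a cube, and $4\equiv 4\pmod 9$ is not a sum of three cubes; hence no three-cube identity of the shape used in Theorem~\ref{three} is available here. I would now split on $n\bmod 6$.

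If $n\equiv 0\pmod 6$ then $2^{6}\equiv 1\pmod 3$ gives $3\mid 2^{n}-1$, so $6\mid P_n$, and the universal identity $6k=(k+1)^3+(k-1)^3+(-k)^3+(-k)^3$ taken with $k=P_n/6$ yields four cubes at once. The remaining case $n\equiv 3\pmod 6$ is the main obstacle: here $P_n\equiv 4\pmod 6$, so the divisibility route is gone, and although there is no congruence obstruction to four cubes, the obvious parametric attempts fail. For example, seeking an integer identity $32b^6-4b^3=\sum_{i=1}^{4}(2b^2+p_ib+q_i)^3$ forces $\sum p_i=0$ from the coefficient of $b^5$, whence the coefficient of $b^3$, namely $\sum_i(p_i^3+12p_iq_i)\equiv\sum_i p_i\equiv 0\pmod 3$, cannot equal the required $-4\equiv 2\pmod 3$. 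The way through is therefore to exhibit a dedicated four-cube identity for $P_{3m}$, obtained by solving the associated coefficient system while allowing dyadic denominators that are cleared by the evenness of $b=2^{m-1}$; the single genuinely small instance $n=3$ is already recorded in Corollary~\ref{perfect}. I expect the real work to lie in producing this explicit identity for $n\equiv 3\pmod 6$ and checking that all four cube-roots come out integral.
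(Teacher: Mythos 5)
Your reductions are fine as far as they go: adjoining $0^3$ to the three-cube representations of Theorem \ref{three} correctly handles $n\equiv 1,2,4,5\pmod 6$, and your identity $6k=(k+1)^3+(k-1)^3+(-k)^3+(-k)^3$ --- which is exactly the paper's even-case identity $t^3-2(t-1)^3+(t-2)^3=6(t-1)$ after the shift $t=k+1$ --- handles $n\equiv 0\pmod 6$ (indeed all even $n$, which is how the paper uses it). But the case $n\equiv 3\pmod 6$ is not proved: you end by saying you ``expect the real work'' to lie in producing the needed identity, so what you have is a plan whose hardest case is left open. The idea you are missing is to represent not $P_n$ itself by a polynomial in a power of $2$, but an arbitrary integer in a fixed residue class: the paper uses
$$
(3t-12)^3-(3t-13)^3-t^3+(t-9)^3=2(9t-130),
$$
which exhibits every integer $\equiv 10\pmod{18}$ as a sum of four cubes, together with the easy congruence $P_{2n+1}\equiv 10\pmod{18}$ for $n\in\N_{+}$; taking $t=\frac{1}{9}(2^{4n}-2^{2n-1}+130)$ settles all odd $n$ at once, with no split into residues mod $6$ at all.

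Moreover, your suggested escape route --- ``allowing dyadic denominators that are cleared by the evenness of $b$'' --- provably cannot succeed for the ansatz you set up, by your own obstruction. Since $2$ is invertible modulo $3$, one has $\Z[1/2]/3\Z[1/2]\cong\F_3$, on which cubing is the identity; hence for any $p_i,q_i\in\Z[1/2]$ with $\sum_i p_i=0$, the $b^3$-coefficient $\sum_i(p_i^3+12p_iq_i)$ is still $\equiv\sum_i p_i\equiv 0\pmod 3$ and can never equal $-4$. The paper's identity evades this for a structural reason your ansatz forbids: its four cube arguments are, to leading order, proportional to $3t,-3t,-t,t$, so the leading cubes cancel ($27-27-1+1=0$) rather than summing to $32$ as in your shape, and the denominator to be cleared is $9$ (absorbed via the congruence mod $18$), not a power of $2$. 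So the gap is genuine: the identity required for $n\equiv 3\pmod 6$ is not of the form you propose to search for, and your proof of Theorem \ref{four} is incomplete precisely there.
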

\begin{proof}
Let us note the classical identity
$$
t^3 - 2(t-1)^3 + (t-2)^3=6(t-1),
$$
and observe that $P_{2n}\equiv 0\pmod{6}$. Thus, by taking
$$
t=\frac{1}{3}(2^{2(2n-1)}-2^{2(n-1)}+3)
$$
we get the representation of the number $P_{2n}$ as a sum of four cubes.

In order to represents $P_{2n+1}$, we note the identity
$$
(3t-12)^3-(3t-13)^3-t^3+(t-9)^3=2(9t-130).
$$
Using simple induction, we easily get the congruence $P_{2n+1}\equiv 10\pmod{18}$ for $n\in\N_{+}$. Thus, by taking
$$
t=\frac{1}{9}(2^{4n}-2^{2n-1}+130)
$$
we get the representation of the number $P_{2n+1}, n\in\N$, as a sum of four cubes. Our theorem is proved.
\end{proof}

\section{Numerical results, questions and conjectures}\label{sec3}

In order to gain more precise insight into the problem we performed a search for solutions of the equation (\ref{maineq}) in integers. Because we are mainly interested in solutions in non-negative integers we use the following procedure. First of all, let us recall that for $a, b\in\Z$ we have $a^3+b^3\equiv 0, 1, 2, 7, 8\pmod{9}$. Moreover, we observed that the sequence $(P_{n}\pmod{9})_{n\in\N_{+}}$ is periodic of the (pure) period 6. More precisely:
$$
(P_{n}\pmod{9})_{n\in\N_{+}}=\overline{(1, 6, 1, 3, 1, 0)}.
$$
%

For given $n$ and each $x\in\{0,\ldots, \lfloor P_{n}^{1/3}\rfloor\}$ satisfying $(P_{n}-x^3)\pmod{9}\in\{0,1,2,7,8\}$, we computed the set
$$
D_{n}(x)=\{d\in\N_{+}:\;P_{n}-x^{3}\equiv 0\pmod{d}\},
$$
i.e., the set of all positive divisors of the number $P_{n}-x^{3}$. The congruence condition is useful in some cases because reduce the number of computations which need to be performed. Indeed, if $n\equiv 2, 4\pmod{6}$ then $P_{n}\equiv 6, 3\pmod{9}$ respectively, and we need to have $x\equiv 2\pmod{3}$ ($x\equiv 1\pmod{3}$). Unfortunately, in remaining cases we need to compute all values of $x$ in order to find non-negative solutions. Next, for each $d\in D_{n}(x)$ such that $d<(P_{n}-x^3)/d$, we solved the system of equations
$$
d=y+z,\quad \frac{P_{n}-x^{3}}{d}=y^2-yz+z^2
$$
for $y, z$ and get
$$
y=\frac{1}{6}\left(3d\pm\sqrt{3\left(\frac{4(P_{n}-x^{3})}{d}-d^2\right)}\right),\;z=\frac{1}{6}\left(3d\mp\sqrt{3\left(\frac{4(P_{n}-x^{3})}{d}-d^2\right)}\right).
$$
In consequence, if the numbers $y, z$ computed in this way were integers we got the solution of the equation (\ref{maineq}). This procedure was implemented in Magma computational package \cite{Magma}, and allows us to get all solutions in positive integers of the equation (\ref{maineq}) with $n\leq 40$. The results of our computations are presented in Table 1 below. We also added the value of $g:=\gcd(x,y,z)$.


\begin{equation*}
\begin{array}{|l|l|l||l|l|l|}
\hline
  n & (x,y,z)              & g          & n & (x,y,z)                        & g\\
  \hline
  3  & (0,1,3)             & 1          & 31 & (1024,1014784,1080320)        & 2^{10}\\
  5  & (4,6,6)             & 2          &    & (53824,684032,1256896)        & 2^6 \\
  7  & (4,4,20)            & 2^2        &    & (90112,464896,1301504)        & 2^{10} \\
  9  & (10,23,49)          & 1          &    & (342016,581120,1274368)       & 2^{9} \\
  11 & (18,94,108)         & 2          &    & (435712,977920,1088000)       & 2^{9}    \\
     & (28,73,119)         & 1          &    & (452624,712312,1227976)       & 2^3\\
  13 & (16,176,304)        & 2^4        &    & (642957,702144,1192051)       & 1\\
  15 & (87,273,802)        & 1          &    & (649984,956288,1049728)       & 2^{7}\\
     & (280,488,736)       & 2^3        & 35 & (103936,1058816,8382976)      & 2^9  \\
  17 & (720,1336,1800)     & 2^3        &    & (825724,2369072,8322436)      & 2^2 \\
  18 & (144,1224,3192)     & 3\cdot 2^3 &    & (1159576,5742485,7364203)     & 1           \\
     & (168,1368,3168)     & 3\cdot 2^3 &    & (1545844,5658327,7401321)     & 1       \\
     & (276,1808,3052)     & 2^2        &    & (2128896,5711872,7332864)     & 2^{10}      \\
     & (968,976,3192)      & 2^3        &    & (2565760,2610912,8220960)     & 2^5          \\
     & (1284,2076,2856)    & 3\cdot 2^2 &    & (4021568,5381152,7175392)     & 2^5          \\
     & (1368,1904,2920)    & 2^3        & 36 & (870912, 8406528, 12088320)   & 3\cdot 2^{9}          \\
  19 & (64,3520,4544)      & 2^6        &    & (3364928, 7935616, 12216768)  & 2^6          \\
     & (1216,1856,5056)    & 2^6        &    & (3663896, 6521760, 12671464)  & 2^3            \\
     & (1968,3516,4420)    & 2^2        & 37 & (4096,16510976,17035264)      & 2^{12}             \\
  21 & (976,9088,11312)    & 2^4        &    & (65536,7086080,20869120)      & 2^{12}          \\
  22 & (13084,14728,14980) & 2^2        &    & (1409488,9313840,20514944)    & 2^{4}           \\
  23 & (10096,19648,29840) & 2^4        &    & (1690048,2408352,21123936)    & 2^{5}          \\
     & (10398,17175,30721) & 1          &    & (1940480,12226048,19669504)   & 2^{9}             \\
     & (19776,20992,26304) & 2^6        &    & (7889536, 14446400, 18109120) & 2^6                \\
  25 & (16,27680,81520)    & 2^4        &    & (2701980,13899489,18889183)   & 1             \\
     & (256,61184,69376)   & 2^8        &    & (5169168,15293424,17894080)   & 2^4  \\
     & (6208,37888,79808)  & 2^6        &    & (5875248,13984848,18669088)   & 2^4  \\
     & (21034,58773,70515) & 1          &    & (10327879,11144196,19091961)  & 1       \\
  26 & (3542,93428,112826) & 2          & 38 & (72704, 24487424, 28477952)   & 2^9             \\
  27 & (39808,89600,201856)& 2^7        & 39 & (3083584, 32842240, 48722624) & 2^6                \\
     & (83110,154196,168298)& 2         &    & (14437236,38893888,44692620)  & 2^2                \\
  28 & (88576,156160,315904) & 2^9      &    & (26259968, 34426624, 45177088)& 2^8                  \\
  29 & (37120,54272,524032)  & 2^8      &    & (29613312,30112512,46079488)  & 2^8                \\
     & (292540,340128,430404)& 2^2      & 40 & (23894752, 58850848, 72873280)& 2^5  \\
  30 & (98816,297216,818944) & 2^8      &    & &                  \\
     & (120576,440992,787808)& 2^5      &    & &                  \\
\hline
\end{array}
\end{equation*}
\begin{center}Table 1. All solutions of the Diophantine equation $P_{n}=x^3+y^3+z^3$ in non-negative integers $x, y, z$ and $n\leq 40$.\end{center}

For given $n$, the time needed to compute solutions with our method was from seconds (for $n\leq 25$) to four days in case of $n=40$. All computations were performed on typical laptop with generation i7 processor and 16 GB of RAM. Moreover, it should be noted that our procedure also computes (some) solutions satisfying $yz<0$, which is a consequence of the construction. In consequence, for each $n\in\{2,\ldots,40\}\setminus\{2, 8, 20\}$, our procedure produce a solution of the equation (\ref{maineq}) with $yz<0$, i.e., exactly one among the numbers $y, z$ is negative. In Table 2 below, we present the integer solution of the equation (\ref{maineq}) without non-negative solutions and with smallest value of $\op{min}\{|x|,|y|,|z|\}$.

\begin{equation*}
\begin{array}{|l|l|c|l|l|}
\hline
  n & (x,y,z)              &  & n & (x,y,z) \\
  \hline
 4  & (-2,4,4)           &  &  24 & (-21716,19656,52340) \\
 10 & (-8,64,64)         &  &  32 & (-5219392,1549376,5285888) \\
 12 & (-54,136,182)      &  &  33 & (-312056,1171940,3280828) \\
 14 & (-430,446,500)     &  &  34 & (-2048,4194304,4194304)\\
 16 & (-32,1024,1024)    &  &     &                             \\
\hline
\end{array}
\end{equation*}
\begin{center}Table 2. Certain integer solutions of the Diophantine equation $P_{n}=x^3+y^3+z^3$ for $n\leq 40$ and without non-negative solutions.\end{center}

Moreover, in Table 3 we present the number of integer solutions which were found by our procedure.
\begin{equation*}
\begin{array}{|c|ccccccccccccc|}
\hline
  n & 2 & 3 & 4 & 5 & 6 & 7 & 8 & 9 & 10 & 11 & 12 & 13 & 14 \\
   \hline
    & 0 & 1 & 1 & 3 & 2 & 2 & 0 & 3 & 2  & 8  & 2  & 6  & 1 \\
   \hline
 \hline
  n & 15 & 16 & 17 & 18 & 19 & 20 & 21 & 22 & 23 & 24 & 25 & 26 & 27 \\
   \hline
    & 4  & 1  & 8  & 38 & 17 & 0  & 7  & 3  & 18 & 4  & 18 & 4  & 16 \\
   \hline
 \hline
  n & 28 & 29 & 30 & 31 & 32 & 33 & 34 & 35 & 36 & 37 & 38 & 39 & 40\\
   \hline
    & 4  & 12 & 11 & 17 & 1  & 4  & 6  & 54 & 14 & 75 & 3  & 10 & 3  \\
  \hline
\end{array}
\end{equation*}
\begin{center}Table 3. The number of integer solutions of the Diophantine equation $P_{n}=x^3+y^3+z^3$, $n\leq 40$, founded by the described procedure.\end{center}


The search of solutions for $n=2, 8, 20$ was performed in a similar way, but without the assumption of positivity of $P_{n}-x^{3}$ and with the replacement of $P_{n}-x^{3}$ by $|P_{n}-x^{3}|$. In this way, for $n=2$, we found the solutions of the equation (\ref{maineq}) presented in the proof of Corollary \ref{perfect}. Moreover, we get the equalities
\begin{align*}
P_{8} &=32^3-4^3-4^3=404^3-124^3-400^3,\\
P_{20}&=8192^3-64^3-64^3=9404^3 - 472^3 - 6556^3,
\end{align*}
which fill the gap.

\begin{rem}
{\rm Let us also note that the non-negative solutions of the equation (\ref{maineq}) for given $n$ often satisfy the condition $\gcd(x,y,z)=2^k$ for certain, not to small, value of $k$. Having in mind this property, we performed numerical search of positive solutions for certain values of $n>40$. The method employed was the same as in the case $n\leq 40$, but instead to work for given $n$, with $P_{n}$ we worked with the (smaller) number $M_{k,n}=2^{a_{n}}2^{3k}(2^{n}-1)$, where $k\in\{1,2,3,4,5\}$ and  $a_{n}\equiv n-1\pmod{3}$. Each representation of $M_{k,n}$ after multiplication by $2^{3m}$, where $m=(n-1-a_{n}-3k)/3$, leads to the representation of $P_{n}$ as a sum of three cubes. Using this approach we found the following representations
\begin{align*}
P_{41}&=(2^{12}\cdot 441)^3 +(2^{12}\cdot 22063)^3 +(2^{12}\cdot 29022)^3,\\
P_{42}&=(2^9\cdot 183840)^3+(2^9\cdot 301469)^3+(2^9\cdot 337507)^3,\\
P_{43}&=(2^{14})^3+(2^{14}\cdot 16255)^3+(2^{14}\cdot 16511)^3,\\
P_{45}&=(2^{12}\cdot 18326)^3 + (2^{12}\cdot 144043)^3 + (2^{12}\cdot 181837)^3,\\
P_{47}&=(2^{14}\cdot 5835)^3 + (2^{14}\cdot 41149)^3 + (2^{14}\cdot 129702)^3,\\
P_{48}&=(2^{14}\cdot 8479)^3 + (2^{14}\cdot 160641)^3 + (2^{14}\cdot 169400)^3,\\
P_{49}&=(2^{16})^3+(2^{16}\cdot 65279)^3+(2^{16}\cdot 65791)^3,\\
P_{51}&=(2^{15}\cdot 91838)^3 +(2^{15}\cdot 252707)^3 + (2^{15}\cdot 380629)^3.\\
\end{align*}

}
\end{rem}
Our numerical search and Theorem \ref{three} suggest the following

\begin{conj}\label{conj1}
For each $n\in\N_{+}$ the Diophantine equation {\rm (\ref{maineq})} has a solution in integers.
\end{conj}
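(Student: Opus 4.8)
The plan is to reduce to the residue classes not yet settled and then to imitate the identity-based argument of Theorem~\ref{three}. By that theorem equation~(\ref{maineq}) is solvable whenever $n\equiv 1,2,4,5\pmod 6$, so the only outstanding classes are $n\equiv 0,3\pmod 6$, that is $n\equiv 0\pmod 3$. Writing $n=3k$ and $w=2^{k}$ gives the clean normal form
\begin{equation*}
P_{3k}=2^{3k-1}(2^{3k}-1)=\frac{w^{3}(w^{3}-1)}{2},
\end{equation*}
so the task becomes the representation of $\tfrac12 w^{3}(w^{3}-1)$ as a sum of three integer cubes for every $w$ that is a power of $2$. First I would try to produce, exactly as in the remark after Theorem~\ref{three}, a genuine polynomial identity $G(t)=A(t)^{3}+B(t)^{3}+C(t)^{3}$ which, after multiplication by a suitable perfect-cube factor and the substitution $t=2^{j}$, collapses to $P_{3k}$.

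Here the structural reason for the difficulty becomes visible. Since $2^{n}-1$ is odd, $v_{2}(P_{n})=n-1$, hence $v_{2}(P_{3k})=3k-1\equiv 2\pmod 3$. In the solved classes the method works precisely because the $2$-adic valuation is congenial: when $n\equiv 1\pmod 3$ the valuation is $\equiv 0\pmod 3$ and one simply extracts the perfect cube $2^{n-1}$, while when $n\equiv 2\pmod 3$ the residual factor is $2$, absorbed by the elementary identity $2a^{3}=a^{3}+a^{3}$ at the cost of one of the three available cubes. For $n\equiv 0\pmod 3$ the residual factor is $2^{2}=4$, and the analogous repair would require writing $4a^{3}$ as a sum of at most three cubes; but $4\equiv 4\pmod 9$ is not a sum of three cubes at all, so the naive reduction breaks down. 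Any working identity must instead arrange that the top-degree coefficients of $A,B,C$ cancel, since the natural leading coefficient has $2$-adic valuation $\equiv 2\pmod 3$ and is therefore never a rational cube; producing such cancellation amounts to solving an auxiliary parametric cubic.

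If no single identity exists, I would split $n=3k$ according to the class of $k$ modulo a larger modulus and look for a separate identity in each class; the numerics of Section~\ref{sec3}, where the non-negative solutions very often satisfy $\gcd(x,y,z)=2^{j}$, suggest that such class-dependent parametrisations are plausible. A more conceptual route is to regard~(\ref{maineq}) for fixed $n$ as an affine cubic surface and to seek a rational curve sweeping out the family $n=3k$; by a classical theorem of Ryley every rational number is a sum of three \emph{rational} cubes through explicit formulas, so rational solvability is automatic, and the whole content of the conjecture is the \emph{integrality} of the resulting cubes, uniformly along the progression $w=2^{k}$.

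The hard part, and the reason the statement remains only a conjecture, is exactly this integrality obstruction in the class $n\equiv 0\pmod 3$. There $P_{n}$ carries the $2$-adic defect $v_{2}(P_{n})\equiv 2\pmod 3$, is neither a cube nor twice a cube, and satisfies $P_{n}\not\equiv\pm4\pmod 9$, so it behaves like a generic admissible value of $x^{3}+y^{3}+z^{3}$. Unconditional existence of integer solutions is at present known only for the special families (a cube, twice a cube) that $P_{3k}$ avoids, so a proof would have to either exhibit the missing explicit identity or establish a genuinely new case of the general sum-of-three-cubes problem.
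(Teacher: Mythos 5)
The statement you were asked to prove is Conjecture~\ref{conj1}, which the paper leaves open: the author states explicitly that he was unable to prove it, and supports it only by Theorem~\ref{three} (covering $n\equiv 1,2,4,5\pmod 6$) together with numerics for $n\leq 40$ and sporadic representations up to $n=51$ (several of which, namely $n=42,45,48,51$, lie in the otherwise untreated class). Your proposal correctly recognizes this status rather than manufacturing a fake proof, and its reduction is accurate: the outstanding classes are $n\equiv 0,3\pmod 6$, i.e.\ $n\equiv 0\pmod 3$; the normal form $P_{3k}=\tfrac{1}{2}w^3(w^3-1)$ with $w=2^k$ is right; $v_2(P_n)=n-1$, so $v_2(P_{3k})\equiv 2\pmod 3$; and your diagnosis of why the identity method of Theorem~\ref{three} succeeds for $n\equiv 1\pmod 3$ (the factor $2^{n-1}$ is a perfect cube) and for $n\equiv 2\pmod 3$ (residual factor $2$, absorbed via $2a^3=a^3+a^3$) but stalls at residual factor $4$ matches the structure of the paper's identities. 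Your appeals to Ryley's theorem and to the fact that unconditional integral solvability is known only for $N$ a cube or twice a cube --- families $P_{3k}$ avoids --- are also correct, and your conclusion coincides with the paper's own assessment. Since the paper contains no proof of this statement, there is nothing to compare a proof against; an honest problem analysis is the right outcome, and yours is essentially sound.

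One logical slip in your obstruction analysis deserves flagging. From the fact that the natural leading coefficient has $v_2\equiv 2\pmod 3$, hence is never a rational cube, you infer that any identity $G=A^3+B^3+C^3$ must have cancelling top-degree terms. That inference only holds if a single summand carries the top degree; if $A,B,C$ share the top degree, the requirement is merely that the leading coefficient be $a^3+b^3+c^3$ for the leading coefficients $a,b,c$, and this is not obstructed by the $2$-adic valuation: for instance $12=7^3+10^3+(-11)^3$ with $v_2(12)=2$. The genuine mod-$9$ obstruction rules out the specific value $4$ as a sum of three integer cubes, not the whole valuation class, so your structural explanation is suggestive rather than conclusive. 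Relatedly, it is worth stating outright (your text only implies it) that the difficulty concerns the \emph{uniform-identity} method, not individual values: the paper's tables exhibit integer solutions for every $n\equiv 0\pmod 3$ with $n\leq 40$, e.g.\ $n=33$ via $(-312056,1171940,3280828)$, so no local or congruence obstruction exists for any single $P_{3k}$.
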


From our table we note that the equation (\ref{maineq}) has no solutions in non-negative integers $x, y, z$ for
$$
n=2, 4, 6, 8, 10, 12, 14, 16, 20, 24, 32, 33.
$$
This numerical observation lead us to the following

\begin{conj}\label{conj2}
For each $\epsilon\in\{0,1\}$, there are infinitely many $n\equiv \epsilon \pmod{2}$ such that the equation {\rm (\ref{maineq})} has no solutions in non-negative integers $x, y, z$.
\end{conj}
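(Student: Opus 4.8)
The plan is to attack Conjecture \ref{conj2} by separating the two genuinely different ways a \emph{non-negative} representation can fail: congruence constraints, which restrict the admissible value of the largest summand, and a positivity (size) constraint, which is \emph{not} of congruence type. First I would set up the reduction used implicitly in Section \ref{sec3}. Writing a putative non-negative solution as $P_{n}=x^3+y^3+z^3$ with $x\geq y\geq z\geq 0$, the largest summand is confined to the interval $(P_{n}/3)^{1/3}\leq x\leq P_{n}^{1/3}$, and for each such $x$ the number $s:=P_{n}-x^3$ must be a sum of two non-negative cubes. Sums of two cubes carry nontrivial local obstructions: modulo $9$ they avoid the classes $\{3,4,5,6\}$ and modulo $7$ they avoid $\{3,4\}$. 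Using $x^3\in\{0,1,8\}\pmod 9$ together with the period-$6$ pattern $\overline{(1,6,1,3,1,0)}$ for $P_{n}\bmod 9$, and the analogous behaviour of $P_{n}$ and of $x^3$ modulo $7$ (periodic with period dividing $63$ jointly), I would pin down, for each residue class of $n$, exactly which residue classes of $x$ can possibly make $s$ a sum of two cubes. This recovers and refines the observation that for $n\equiv 2,4\pmod 6$ one must take $x\equiv 2,1\pmod 3$ respectively.

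The second step is the probabilistic heuristic that makes the conjecture believable and indicates where to look. The number of admissible $x$ is of order $P_{n}^{1/3}$, while an integer of size $P_{n}$ is a sum of two non-negative cubes with ``probability'' of order $P_{n}^{-1/3}$, there being only $\asymp X^{2/3}$ such numbers up to $X$. Hence the expected number of non-negative representations of $P_{n}$ is of order $1$, uniformly in $n$, and a Poisson model predicts that a positive proportion of $n$ in each parity class admit no non-negative representation. This matches the irregular exceptional set $\{2,4,6,8,10,12,14,16,20,24,32,33\}$ observed in Table 1. To pass from ``a positive proportion'' to the desired ``infinitely many in each parity,'' one would want to isolate a single infinite family of $n$ for which \emph{every} admissible $x$ is eliminated.

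The hard part will be precisely that non-negativity is not a local condition. Any fixed modulus $M$ forbids only finitely many residue classes of $x$, so a positive proportion of the $\asymp P_{n}^{1/3}$ admissible values of $x$ survive every congruence sieve; congruences alone can therefore never remove all candidates. A proof would thus require an honest, non-probabilistic mechanism forcing $P_{n}-x^3$ to fail to be a sum of two non-negative cubes for all surviving $x$ at once. The most plausible such mechanism is a gap result: one would try to show that, infinitely often in each parity, $P_{n}$ falls into an interval $[X,X+H]$ containing no sum of three non-negative cubes, with $H$ large compared with the local spacing of the candidate locations. No gap estimates of this strength are presently available (indeed, even the conjectured positive density of the set of sums of three non-negative cubes is unproven), and the absence of any congruence pattern in the exceptional set rules out the simplest kind of infinite family. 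For these reasons I expect the size/positivity step, not the congruence bookkeeping, to be where the argument stalls, and I would treat the even-$n$ half of the conjecture as the realistic first target.
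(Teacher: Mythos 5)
This statement is Conjecture~\ref{conj2}, and the paper contains no proof of it: it is put forward solely on the strength of the numerical evidence in Table~1 (the exceptional list $n=2,4,6,8,10,12,14,16,20,24,32,33$). So there is no argument in the paper to compare yours against, and your proposal --- as you yourself acknowledge in the final paragraph --- is not a proof either, but a heuristic plus an obstruction analysis. On its own terms most of it is sound: cubes modulo $9$ lie in $\{0,1,8\}$, so sums of two cubes avoid $\{3,4,5,6\}\pmod{9}$ (exactly the sieve of Section~\ref{sec3}, and your recovery of the constraints $x\equiv 2\pmod 3$ for $n\equiv 2\pmod 6$ and $x\equiv 1\pmod 3$ for $n\equiv 4\pmod 6$ matches the paper); the mod~$7$ exclusion of $\{3,4\}$ is correct since cubes mod $7$ lie in $\{0,1,6\}$; and the expected-count computation is right, since $\sum_{x}(P_n-x^3)^{-1/3}\approx\int_0^1(1-u^3)^{-1/3}\,du=O(1)$ uniformly in $n$. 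Your diagnosis that non-negativity is not a congruence condition, so that no finite-modulus sieve can eliminate all $\asymp P_n^{1/3}$ candidate values of $x$, correctly identifies why the conjecture is beyond current methods.

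There is, however, one concrete flaw in the heuristic step. Your Poisson model ``predicts that a positive proportion of $n$ in each parity class admit no non-negative representation,'' but this prediction is provably false for a full sixth of all $n$: by the identity $2t^6-1=(t^2+t-1)^3+(t^2-t-1)^3+1$ quoted in Section~\ref{sec1}, every $P_{6n+1}$ \emph{is} a sum of three positive cubes. The point is that $P_n$ is not a random integer of its size; explicit polynomial identities can, and here do, override the probabilistic model on structured subfamilies, and an expected count of order $1$ is compatible with certain representations along such families. Consequently the odd half of the conjecture can only draw its infinitely many $n$ from the classes $n\equiv 3,5\pmod 6$, where the paper's sole supporting datum is $n=33$ (every other odd $n\leq 40$ appears in Table~1 with a non-negative solution). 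A correct version of your heuristic should therefore be restricted to $n\not\equiv 1\pmod 6$, and should concede that an identity of the same kind could in principle cover further residue classes --- which is precisely why neither a proof nor even a fully robust heuristic for Conjecture~\ref{conj2} is currently available. Your instinct that the even half is the realistic first target is consistent with the data, and your identification of a gap result for sums of three non-negative cubes as the missing mechanism is a fair assessment of the state of the art.
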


Moreover, according to our numerical search, one can also ask whether the conjecture proposed by Farhi is not too optimistic. Indeed, in his proof of the existence of representations of a perfect number $P_{p}$ as a sum of five non-negative cubes, with $p\geq 3$, he used only the fact that $p\equiv \pm 1\pmod{6}$ and the well-known polynomial identity
$$
2t^6-1=(t^2+t-1)^3+(t^2-t-1)^3+1,
$$
i.e., any special property of perfect numbers was used. We also observed that the smallest odd $n\in\N_{\geq 3}$, such that the equation (\ref{maineq}) has no solutions in positive integers is 33. Due to our limited experimental data ($n\leq 40$ in our search), there is no strong reason to believe that for all perfect numbers $P_{p}$, the equation $P_{p}=x^3+y^3+z^3$ has a solution in non-negative integers. On the other side, the first possible candidate for the counterexample to the conjecture is $p=89$. The corresponding perfect number $P_{89}$ has 54 digits, and the question about the existence of positive integer solutions of the equation $P_{89}=x^3+y^3+z^3$ is rather difficult

It is also interesting to note the equalities
$$
P_{3}=1^3+3^3, \quad P_{7}=28^3-24^3,\quad P_{9}=60^3-44^3,
$$
which give all solutions of the equation $P_{n}=x^3+y^3, n\leq 140$, in integers. This observation lead us to the following

\begin{ques}
Is the set of integer solutions (in variables $n, x, y$) of the Diophantine equation $P_{n}=x^3+y^3$ finite?
\end{ques}

We expect that the answer is YES.

\bigskip

\noindent Jagiellonian University, Faculty of Mathematics and Computer Science, Institute of Mathematics, {\L}ojasiewicza 6, 30 - 348 Krak\'{o}w, Poland;

\noindent email: {\tt maciej.ulas@uj.edu.pl}


\begin{thebibliography}{100}
\bibitem{Dic} L. E. Dickson, History of the theory of numbers, Vol I, Washington: Carnegie Institute
of Washington. 1919--1923; reprint ed., Chelsea Publ. Co., New York (1952).
\bibitem{Farhi} B. Farhi, {\it On the representation of an even perfect number as the sum of five cubes}, to appear in Funct. Approx. Comment. Math.
\bibitem{Magma} W. Bosma, J. Cannon, C. Playoust, {\it The Magma algebra system. I. The user language}, J. Symbolic Comput., 24 (1997), 235--265.
\bibitem{Mor} L. J. Mordell, {\it On sums of three cubes}, Journal of the LMS 17(3) (1942), 139--144.
\end{thebibliography}
 \end{document}